\begin{document}

\title{A Short Proof of a Grothendieck-Lefschetz Theorem for Equivariant Picard Groups}
\author{David Villalobos-Paz}
\date{}
\maketitle

\begin{abstract}
We give a short proof of a Grothendieck-Lefschetz Theorem for equivariant Picard groups of nonsingular varieties with the action of an affine algebraic group.
\end{abstract}

\section{Introduction}
The Grothendieck-Lefschetz Theorem for Picard groups, see Cor XII.3.6 and Cor XII.3.7 in \cite{SGA2} or Cor IV.3.3 in \cite{Hartshorne1970}, states that if $X$ is a nonsingular projective variety over a field of characteristic zero and $Y$ is a subvariety of $X$, such that $\mathrm{dim}(Y) \geq 3$ and $Y$ is a complete intersection in $X$, then the natural map $\mathrm{Pic}(X) \to \mathrm{Pic}(Y)$ is an isomorphism.

If $G$ is an algebraic group acting on $X$, we let $\mathrm{Pic}^G(X)$ denote the equivariant Picard group of $X$, as defined in Chapter 1.3 of \cite{MumfordFogartyKirwan1994}; in other words, the elements of $\mathrm{Pic}^G(X)$ are line bundles over $X$ with a $G$-linearisation, and this is a group via the tensor product. It has been shown recently in \cite{Ravi2017} that a version of Grothendieck-Lefschetz holds for the equivariant Picard group, provided that the group $G$ is assumed to be finite. This was done by adapting the proof of Grothendieck-Lefschetz in \cite{Hartshorne1970} to deal with the group action.

In this paper, we drop the finiteness hypothesis and prove the result for any affine algebraic group. More specifically, we prove the following:

\newtheorem{theorem}{Theorem}

\begin{theorem}

Let $k$ be a field. Let $Y \subset X$ be normal proper $k$-varieties. Let $G$ be an affine algebraic group over $k$ acting on $X$, and suppose $Y$ is G-invariant. If the natural map $\mathrm{Pic}(X) \to \mathrm{Pic}(Y)$ is injective (resp. an isomorphism), then the natural map on equivariant Picard groups $\mathrm{Pic}^G(X) \to \mathrm{Pic}^G(Y)$ is also injective (resp. an isomorphism).

\end{theorem}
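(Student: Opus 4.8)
The plan is to compare the forgetful maps $\mathrm{Pic}^G(X)\to\mathrm{Pic}(X)$ and $\mathrm{Pic}^G(Y)\to\mathrm{Pic}(Y)$ and to exploit the principle that everything which can go wrong is governed by groups of units on powers of $G$, which do not see the difference between $X$ and $Y$. Write $a\colon G\times X\to X$ for the action and $p\colon G\times X\to X$ for the projection, so that an object of $\mathrm{Pic}^G(X)$ is a pair $(L,\phi)$ where $\phi\colon a^{*}L\to p^{*}L$ is an isomorphism satisfying the usual cocycle condition on $G\times G\times X$; since $Y$ is $G$-invariant these maps restrict to $Y$. Because $X$ and $Y$ are proper and geometrically integral, $\mathcal{O}(X)^{*}=\mathcal{O}(Y)^{*}=k^{*}$, and Rosenlicht's description of units on products gives $\mathcal{O}(G^{n}\times X)^{*}=\mathcal{O}(G^{n})^{*}=\mathcal{O}(G^{n}\times Y)^{*}$ for every $n$, with restriction to $G^{n}\times Y$ an isomorphism. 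This last fact is the engine of the whole argument.

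First I would compute the kernel of the forgetful map. A linearisation of the trivial bundle $\mathcal{O}_{X}$ is a cocycle $f\in\mathcal{O}(G\times X)^{*}=\mathcal{O}(G)^{*}$, and the cocycle condition forces $f$ to be a homomorphism $G\to\mathbb{G}_{m}$; since $\mathcal{O}(X)^{*}=k^{*}$ there are no nontrivial coboundaries. Hence $\ker(\mathrm{Pic}^G(X)\to\mathrm{Pic}(X))\cong X^{*}(G):=\mathrm{Hom}(G,\mathbb{G}_{m})$, \emph{independently of $X$}, and this identification is compatible with restriction to $Y$. This produces a commutative ladder whose rows are the left-exact sequences $0\to X^{*}(G)\to\mathrm{Pic}^G(X)\to\mathrm{Pic}(X)$ and $0\to X^{*}(G)\to\mathrm{Pic}^G(Y)\to\mathrm{Pic}(Y)$, with the left vertical map the identity and the other two verticals the restriction maps. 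The injective case is then an immediate diagram chase: if $\alpha\in\mathrm{Pic}^G(X)$ dies in $\mathrm{Pic}^G(Y)$, then its image in $\mathrm{Pic}(X)$ dies in $\mathrm{Pic}(Y)$, hence vanishes by hypothesis, so $\alpha$ comes from a character, which must itself vanish because $X^{*}(G)$ injects into $\mathrm{Pic}^G(Y)$.

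For the isomorphism case the content is surjectivity, and this is where I expect the real work. The preliminary step is a seesaw argument: using that $X$ is proper and that $\mathrm{Pic}(X)\to\mathrm{Pic}(Y)$ is injective, a line bundle on $G^{n}\times X$ that is trivial on $G^{n}\times Y$ is fibrewise trivial over $G^{n}$, hence pulled back from $G^{n}$, hence trivial; thus $\mathrm{Pic}(G^{n}\times X)\to\mathrm{Pic}(G^{n}\times Y)$ is injective. Now take $\beta=(N,\phi_{Y})\in\mathrm{Pic}^G(Y)$ and, using that $\mathrm{Pic}(X)\to\mathrm{Pic}(Y)$ is onto, choose $M\in\mathrm{Pic}(X)$ with $M|_{Y}=N$. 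Since $(a^{*}M\otimes p^{*}M^{-1})|_{G\times Y}$ is trivial, injectivity for $n=1$ yields an isomorphism $\phi\colon a^{*}M\to p^{*}M$; after correcting $\phi$ by a unit in $\mathcal{O}(G)^{*}$ I may assume $\phi|_{G\times Y}=\phi_{Y}$. It then remains to verify the cocycle condition for $\phi$: its failure is a unit $c(\phi)\in\mathcal{O}(G\times G\times X)^{*}=\mathcal{O}(G\times G)^{*}$ whose restriction to $G\times G\times Y$ equals $c(\phi_{Y})=1$, so injectivity on these unit groups forces $c(\phi)=1$, and $(M,\phi)$ is a genuine equivariant bundle lifting $\beta$.

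The main obstacle is precisely this descent of the linearisation: a priori a bundle on $X$ need not be $G$-linearisable even when its restriction to $Y$ is, and the cocycle condition could fail. What rescues the argument is that both the ambiguity in the choice of $\phi$ and the obstruction to the cocycle condition live in unit groups $\mathcal{O}(G^{n}\times X)^{*}$, which by Rosenlicht coincide with $\mathcal{O}(G^{n})^{*}$ and therefore restrict isomorphically to $Y$, so that triviality on $Y$ propagates to $X$. I would also flag two technical points to treat carefully: the reduction to the case $k=\bar{k}$ (with Galois descent) needed to run seesaw over non-closed points, and the verification that the identification of the kernels with $X^{*}(G)$ and the obstruction class $c(\phi)$ are genuinely natural with respect to restriction to the $G$-invariant subvariety $Y$.
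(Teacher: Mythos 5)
Your route is genuinely different from the paper's: the paper proves a four-term exact sequence $0\to\mathrm{Hom}_{\mathrm{GpSch}}(G,k^\times)\to\mathrm{Pic}^G(X)\to\mathrm{Pic}(X)^G\to\mathrm{Ext}^1_{\mathrm{GpSch}}(G,k^\times)$ via Brion's lifting-group construction, writes the same sequence for $Y$, and finishes with the Five Lemma; you instead work at the level of cocycles and transfer the linearisation from $Y$ to $X$ by hand, using that all ambiguities and obstructions live in unit groups $\mathcal{O}(G^n\times X)^*\cong\mathcal{O}(G^n)^*\cong\mathcal{O}(G^n\times Y)^*$. Your kernel computation (kernel of the forgetful map $=$ characters of $G$, independent of the variety) is correct and is exactly the paper's first exactness statement, the injectivity half is a correct diagram chase, and over an \emph{algebraically closed} field your surjectivity argument (seesaw to produce some isomorphism $\phi\colon a^*M\to p^*M$, then kill the cocycle discrepancy $c(\phi)\in\mathcal{O}(G\times G)^*$ by restriction to $Y$) is correct and pleasantly self-contained.

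The genuine gap is the seesaw step over a general field, and it is not the removable technicality you take it for. Seesaw requires triviality of $W=a^*M\otimes p^*M^{-1}$ on the fibres $\{g\}\times X$ over all \emph{geometric} points $g$ of $G$, so you need injectivity of $\mathrm{Pic}(X_K)\to\mathrm{Pic}(Y_K)$ for algebraically closed extensions $K$ of $k$. This is strictly stronger than the hypothesis and does not follow from it: take $X\subset\mathbb{P}^3_{\mathbb{Q}}$ the quadric $xy=z^2-dw^2$ with $d$ a nonsquare, and $Y$ the conic cut out by $w=0$. Then $\mathrm{Pic}(X)=\mathbb{Z}\cdot\mathcal{O}_X(1)$ (Galois swaps the two rulings) and $\mathrm{Pic}(X)\to\mathrm{Pic}(Y)\cong\mathbb{Z}$ is injective (it is multiplication by $2$), yet over $\bar{\mathbb{Q}}$ the map $\mathrm{Pic}(X_{\bar{\mathbb{Q}}})=\mathbb{Z}^2\to\mathbb{Z}$, $(a,b)\mapsto a+b$, has kernel $\mathbb{Z}\cdot(1,-1)\neq 0$. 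So "reduce to $k=\bar{k}$ by Galois descent" cannot work as stated: the hypothesis does not ascend to $\bar{k}$ (and in the other direction you would still have to descend the conclusion, which you do not address; note also that identifying the pulled-back bundle $B$ on $G$ as trivial uses a rational point of $Y$). This is exactly the trap the paper's argument avoids: its obstruction groups $\mathrm{Hom}_{\mathrm{GpSch}}(G,k^\times)$ and $\mathrm{Ext}^1_{\mathrm{GpSch}}(G,k^\times)$ are attached to $G$ over $k$ itself, and the hypothesis enters only through the map $\mathrm{Pic}(X)^G\to\mathrm{Pic}(Y)^G$ over $k$, so no base change is ever performed. To repair your proof over an arbitrary field you would need to establish injectivity of $\mathrm{Pic}(G^n\times X)\to\mathrm{Pic}(G^n\times Y)$ by some means other than seesaw (e.g.\ structure results for Picard groups of products with linear algebraic groups), or else weaken the theorem to algebraically closed base fields.
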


As a corollary, we obtain the equivariant version of Grothendieck-Lefschetz:

\newtheorem{corollary}[theorem]{Corollary}

\begin{corollary}

Let $k$ be a field of characteristic zero. Let $X$ be a nonsingular projective variety over $k$ and let $G$ be an affine algebraic group over $k$ acting on $X$. Let $Y$ be a $G$-invariant normal subvariety of $X$, such that $\mathrm{dim}(Y) \geq 3$ and $Y$ is a complete intersection in $X$. Then the natural map $\mathrm{Pic}^G(X) \to \mathrm{Pic}^G(Y)$ is an isomorphism. If instead we assume that $\mathrm{dim}(Y) = 2$, then the natural map $\mathrm{Pic}^G(X) \to \mathrm{Pic}^G(Y)$ is injective.

\end{corollary}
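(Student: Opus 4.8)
The plan is to deduce the equivariant statement from the non-equivariant hypothesis by means of a natural exact sequence that compares $\mathrm{Pic}^{G}$ with $\mathrm{Pic}$ and whose two outer terms depend only on $G$, not on the variety. Since $X$ and $Y$ are proper normal varieties, their global units are constants (for a geometrically integral proper variety, $\mathcal{O}(X)^{\ast}=\mathcal{O}(Y)^{\ast}=k^{\ast}$); consequently the kernel of the forgetful map $\mathrm{Pic}^{G}(X)\to\mathrm{Pic}(X)$, which consists of the $G$-linearisations of the trivial bundle $\mathcal{O}_{X}$, is canonically the character group $\widehat{G}=\mathrm{Hom}(G,\mathbb{G}_{m})$, and likewise for $Y$. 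I would then use the exact sequence
\[
0\longrightarrow\widehat{G}\longrightarrow\mathrm{Pic}^{G}(X)\longrightarrow\mathrm{Pic}(X)\longrightarrow\mathrm{Pic}(G),
\]
whose last arrow is the obstruction to linearising a $G$-invariant bundle, obtained by restricting $\sigma^{\ast}L\otimes p^{\ast}L^{-1}$ along the action $\sigma$ and the projection $p$ on $G\times X$. The decisive point is that the inclusion of $\widehat{G}$ and the obstruction map are natural with respect to restriction to a $G$-invariant subvariety, while the outer terms $\widehat{G}$ and $\mathrm{Pic}(G)$ are literally the \emph{same} for $X$ and for $Y$.

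Granting the sequence, the rest is a diagram chase. I would assemble the commutative ladder
\[
\begin{tikzcd}
0 \arrow[r] & \widehat{G} \arrow[r] \arrow[d, "\mathrm{id}"] & \mathrm{Pic}^{G}(X) \arrow[r] \arrow[d] & \mathrm{Pic}(X) \arrow[r] \arrow[d] & \mathrm{Pic}(G) \arrow[d, "\mathrm{id}"] \\
0 \arrow[r] & \widehat{G} \arrow[r] & \mathrm{Pic}^{G}(Y) \arrow[r] & \mathrm{Pic}(Y) \arrow[r] & \mathrm{Pic}(G)
\end{tikzcd}
\]
with exact rows, the two end verticals being the identity and the two middle ones the natural restrictions. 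For the injective case only the left three terms are needed: the four lemma (or directly, a class dying in $\mathrm{Pic}^{G}(Y)$ has underlying bundle dying in $\mathrm{Pic}(Y)$, hence trivial in $\mathrm{Pic}(X)$, so it comes from a character $\chi\in\widehat{G}$, and $\widehat{G}\hookrightarrow\mathrm{Pic}^{G}(Y)$ forces $\chi=0$) gives injectivity of $\mathrm{Pic}^{G}(X)\to\mathrm{Pic}^{G}(Y)$ whenever $\mathrm{Pic}(X)\to\mathrm{Pic}(Y)$ is injective. For the isomorphism case the full five-term sequence and the five lemma apply; concretely, to lift $\beta\in\mathrm{Pic}^{G}(Y)$ one first lifts its underlying bundle to $L\in\mathrm{Pic}(X)$, notes that the linearisability obstruction of $L$ equals that of its restriction and hence vanishes, chooses any linearisation $\widetilde{L}$, and finally corrects $\widetilde{L}$ by the character in $\widehat{G}$ measuring the discrepancy between $\widetilde{L}|_{Y}$ and $\beta$.

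The genuine work lies in establishing the exact sequence in the stated generality, and I expect this to be the main obstacle. For a general \emph{affine}, hence possibly disconnected, group $G$ over an arbitrary field one cannot simply quote the connected case treated by Knop--Kraft--Vust and Brion. The left-hand portion (exactness through $\mathrm{Pic}(X)$) I would handle uniformly, since the identification $\ker(\mathrm{forget})=\widehat{G}$ used only $\mathcal{O}(X)^{\ast}=k^{\ast}$; this already suffices for the injectivity half, which matches the weaker $\dim Y=2$ conclusion of the corollary. The right-hand portion requires the linearisability criterion, which I would reduce to showing that $\sigma^{\ast}L\cong p^{\ast}L$ on $G\times X$ implies $L$ is linearisable, and this in turn rests on the functorial product decomposition $\mathrm{Pic}(G\times X)\cong\mathrm{Pic}(G)\oplus\mathrm{Pic}(X)$, valid because $X$ is normal with trivial units. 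Disconnectedness would be treated by passing to the identity component $G^{0}$ and descending along the finite group $\pi_{0}(G)$, and a non-closed $k$ causes no trouble since every term of the sequence is formed over $k$; once the sequence is in place and natural, the five lemma closes the argument with no further geometry, which is exactly what keeps the proof short.
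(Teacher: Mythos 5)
Your overall strategy --- compare $\mathrm{Pic}^G$ with $\mathrm{Pic}$ via an exact sequence whose outer terms depend only on $G$, then run the five lemma and feed in classical Grothendieck--Lefschetz --- is exactly the paper's strategy (its Theorem 1 plus its Lemma 3). But the sequence you propose,
\[
0 \longrightarrow \widehat{G} \longrightarrow \mathrm{Pic}^G(X) \longrightarrow \mathrm{Pic}(X) \longrightarrow \mathrm{Pic}(G),
\]
is the Knop--Kraft--Vust sequence, and it is false in the generality the corollary demands, namely for disconnected $G$; your fallback of ``descending along $\pi_0(G)$'' cannot repair this, because the finite case is precisely where the sequence breaks (and that case is Ravi's theorem, the result this paper is generalising). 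Concretely, take $X = E$ an elliptic curve (normal, proper), $G = E[2] \cong (\mathbb{Z}/2)^2$ in characteristic zero (finite, hence affine algebraic) acting by translation, and $L$ a line bundle of degree $2$ on $E$. Then $L$ is $G$-invariant and $\mathrm{Pic}(G) = 0$, so your sequence would force $L$ to be linearisable; but the lifting group of $L$ is Mumford's theta group, a Heisenberg extension $1 \to k^\times \to \mathcal{G}(L) \to E[2] \to 1$ whose commutator pairing is the nondegenerate Weil pairing, so it does not split even as abstract groups and $L$ admits no linearisation. Two distinct things fail for disconnected $G$: the image of the forgetful map lies in $\mathrm{Pic}(X)^G$, which can be a proper subgroup of $\mathrm{Pic}(X)$ (e.g.\ $\mathbb{Z}/2$ swapping the factors of $E \times E$), and, even for invariant bundles, the linearisability obstruction is the class of a group extension, which $\mathrm{Pic}(G)$ cannot see (for finite $G$ one has $\mathrm{Pic}(G) = 0$ while $H^2(G, k^\times) \neq 0$ in general). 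Your decomposition $\mathrm{Pic}(G \times X) \cong \mathrm{Pic}(G) \oplus \mathrm{Pic}(X)$ likewise requires $G$ connected.

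The paper's Lemma 3 corrects exactly these two points: its sequence is
\[
0 \longrightarrow \mathrm{Hom}_{\mathrm{GpSch}}(G, k^\times) \longrightarrow \mathrm{Pic}^G(X) \longrightarrow \mathrm{Pic}(X)^G \longrightarrow \mathrm{Ext}^1_{\mathrm{GpSch}}(G, k^\times),
\]
with the $G$-invariant Picard group as third term and the group of group-scheme extensions of $G$ by $k^\times$ (the class of the lifting group, following Brion) as the obstruction group. For connected $G$ this recovers your sequence, since then $\mathrm{Pic}(X)^G = \mathrm{Pic}(X)$ and $\mathrm{Ext}^1_{\mathrm{GpSch}}(G,\mathbb{G}_m) \cong \mathrm{Pic}(G)$, but in general it is genuinely different. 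What survives of your argument: the identification $\ker(\mathrm{forget}) = \widehat{G}$, which indeed only needs $\mathcal{O}(X)^\times = k^\times$, and hence your injectivity chase (the $\dim Y = 2$ half); and the diagram chase itself, which, once run with the corrected sequence --- noting that injectivity or bijectivity of $\mathrm{Pic}(X) \to \mathrm{Pic}(Y)$ passes to $G$-invariants because that map is $G$-equivariant --- is precisely the paper's proof, with the corollary then reduced to the classical Grothendieck--Lefschetz theorem exactly as you intend.
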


\section*{Acknowledgements}
The author would like to thank his advisor, J\'anos Koll\'ar, for his constant support, encouragement, and very useful observations. He also thanks Ziquan Zhuang for a helpful conversation.

\section{Equivariant Grothendieck-Lefschetz}
Before proving Theorem 1, we need a technical lemma that will give us an exact sequence involving $\mathrm{Pic}^G(X)$. Throughout, we assume that varieties are irreducible.

\newtheorem{lemma}[theorem]{Lemma}

\begin{lemma}
Let $X$ be a normal proper variety over $k$ and $G$ be an affine algebraic group over $k$ acting on $X$. Then there exists an exact sequence of abelian groups

\[ 0 \rightarrow \mathrm{Hom}_\mathrm{GpSch}(G,k^\times) \rightarrow \mathrm{Pic}^G (X) \rightarrow \mathrm{Pic}(X)^G \rightarrow \mathrm{Ext}_\mathrm{GpSch}^1 (G,k^\times).\]

\end{lemma}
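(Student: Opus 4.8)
The plan is to interpret all four terms as cohomology of $G$ with values in $\mathbb{G}_m = k^\times$, and to read off the sequence from the low-degree data attached to the forgetful map $\mathrm{Pic}^G(X) \to \mathrm{Pic}(X)$. I would work throughout with the cocycle description: an element of $\mathrm{Pic}^G(X)$ is a pair $(L,\Phi)$, where $L \in \mathrm{Pic}(X)$ and $\Phi\colon a^*L \to p_2^*L$ is an isomorphism on $G \times X$ satisfying the cocycle identity on $G \times G \times X$; here $a,p_2\colon G\times X \to X$ denote the action and the second projection. The central technical input is Rosenlicht's theorem on units of a product, which together with the properness and integrality of $X$ (so that $\mathcal{O}^\times(X) = k^\times$) gives $\mathcal{O}^\times(G^n \times X) = \mathcal{O}^\times(G^n)$ for all $n$; this is what lets every piece of function-theoretic data below be pushed onto powers of $G$ alone.

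The middle map is the forgetful map $(L,\Phi)\mapsto L$. Its image lies in $\mathrm{Pic}(X)^G$, since $\Phi$ exhibits an isomorphism $a^*L \cong p_2^*L$. To compute its kernel I restrict to $L = \mathcal{O}_X$, so that a linearization is a unit $\Phi \in \mathcal{O}^\times(G \times X) = \mathcal{O}^\times(G)$; the cocycle identity then forces $\Phi(gh) = \Phi(g)\Phi(h)$, i.e. $\Phi$ is a homomorphism $G \to \mathbb{G}_m$. Since isomorphisms of linearized trivial bundles are given by constants in $k^\times$, which commute with everything, distinct characters give distinct classes, and the kernel is exactly $\mathrm{Hom}_\mathrm{GpSch}(G,k^\times)$. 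This establishes exactness at the first two terms.

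For the obstruction map $\mathrm{Pic}(X)^G \to \mathrm{Ext}^1_\mathrm{GpSch}(G,k^\times)$, I take $L \in \mathrm{Pic}(X)^G$ and choose any isomorphism $\Phi\colon a^*L \to p_2^*L$, which exists by $G$-invariance but need not be a cocycle. Its failure to satisfy the cocycle identity is an automorphism of a line bundle on $G \times G \times X$, hence an element $c(\Phi) \in \mathcal{O}^\times(G\times G\times X) = \mathcal{O}^\times(G\times G)$, and a direct check shows $c(\Phi)$ is a normalized $2$-cocycle of $G$ valued in $\mathbb{G}_m$. Rescaling $\Phi$ by a unit in $\mathcal{O}^\times(G\times X) = \mathcal{O}^\times(G)$ changes $c(\Phi)$ by a coboundary, so the class $[c(\Phi)]$ is well defined in the Hochschild group $H^2(G,\mathbb{G}_m)$; under the standard identification of $H^2(G,\mathbb{G}_m)$ with central extensions this is the map to $\mathrm{Ext}^1_\mathrm{GpSch}(G,k^\times)$. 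Exactness at $\mathrm{Pic}(X)^G$ is then immediate, since $[c(\Phi)] = 0$ says precisely that $\Phi$ can be corrected to an honest cocycle, i.e. that $L$ admits a $G$-linearization and hence lies in the image of the forgetful map.

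The step I expect to be the main obstacle is pinning down the target of the obstruction map: one must verify that the cochains $c(\Phi)$ produced above are exactly the normalized $2$-cocycles classifying central extensions $1 \to \mathbb{G}_m \to E \to G \to 1$, so that the relevant group is $\mathrm{Ext}^1_\mathrm{GpSch}(G,k^\times)$ and not some larger cohomology group, and that this matching is functorial. A cleaner and essentially equivalent route for the last two terms is Mumford's theta-group construction: to $L \in \mathrm{Pic}(X)^G$ one associates the group scheme $\mathcal{G}(L)$ of pairs $(g,\psi)$ with $\psi\colon g^*L \to L$ an isomorphism, which sits in a central extension $1 \to \mathbb{G}_m \to \mathcal{G}(L) \to G \to 1$ whose class in $\mathrm{Ext}^1_\mathrm{GpSch}(G,k^\times)$ is the obstruction and whose scheme-theoretic sections are exactly the linearizations of $L$; in this approach the obstacle instead becomes checking that $\mathcal{G}(L)$ is representable and that $\mathcal{G}(L)\to G$ is faithfully flat.
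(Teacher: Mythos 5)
Your computation of the kernel is correct, and it is essentially the argument behind Cor.~7.1 of Dolgachev that the paper cites for that step. The genuine gap is at the third term: the obstruction map you construct is not defined on all of $\mathrm{Pic}(X)^G$. In this paper (following Brion), $\mathrm{Pic}(X)^G$ means pointwise invariance, i.e.\ $g^\ast L \cong L$ for every $g$ in the image of $G \to \mathrm{Aut}(X)$, and this does \emph{not} imply the existence of a global isomorphism $\Phi \colon a^\ast L \to p_2^\ast L$ on $G \times X$. By the seesaw theorem one only gets $a^\ast L \otimes p_2^\ast L^{-1} \cong p_1^\ast N$ for some $N \in \mathrm{Pic}(G)$, and $N$ can be nontrivial: take $G = \mathrm{PGL}_2$, $X = \mathbb{P}^1$, $L = \mathcal{O}(1)$. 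Every automorphism preserves degree, so $L \in \mathrm{Pic}(X)^G$; but restricting a hypothetical $\Phi$ to $G \times \{x\}$ would force $\mu_x^\ast \mathcal{O}(1) \cong \mathcal{O}_G$ for the orbit map $\mu_x \colon G \to \mathbb{P}^1$, whereas $\mu_x^\ast \mathcal{O}(1)$ generates $\mathrm{Pic}(\mathrm{PGL}_2) \cong \mathbb{Z}/2$. So no $\Phi$ exists, and your construction assigns no class to precisely the bundle that matters most: $\mathcal{O}(1)$ is the standard example of an invariant, non-linearisable bundle, so exactness at $\mathrm{Pic}(X)^G$ requires it to receive a \emph{nonzero} obstruction.

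The same example shows that your identification of the target is wrong in general: Hochschild $H^2(G,\mathbb{G}_m)$, built from algebraic $2$-cocycles, classifies only those extensions $1 \to \mathbb{G}_m \to E \to G \to 1$ that are split as $\mathbb{G}_m$-torsors over $G$, and is in general a \emph{proper} subgroup of $\mathrm{Ext}^1_{\mathrm{GpSch}}(G,k^\times)$. The obstruction attached to $\mathcal{O}(1)$ above is the extension $1 \to \mathbb{G}_m \to \mathrm{GL}_2 \to \mathrm{PGL}_2 \to 1$, which is a nontrivial torsor (otherwise $\mathrm{Pic}(\mathrm{GL}_2) \cong \mathrm{Pic}(\mathrm{PGL}_2) \neq 0$, but $\mathrm{Pic}(\mathrm{GL}_2) = 0$), hence corresponds to no cocycle; the part of the obstruction your cochains cannot see is exactly the class of $N$ in $\mathrm{Pic}(G)$. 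Note also that redefining $\mathrm{Pic}(X)^G$ to require $a^\ast L \cong p_2^\ast L$ would prove a different, weaker lemma and would break the Five Lemma argument in Theorem~1, which needs the pointwise-invariant group. Your fallback sketch --- the theta-group $\mathcal{G}(L)$ of pairs $(g,\psi)$ --- is the correct repair and is in fact the paper's actual argument (Brion's lifting group): it never chooses a $\Phi$, and $\mathcal{G}(L) \to G$ is allowed to be a nontrivial torsor, so it captures the full class in $\mathrm{Ext}^1_{\mathrm{GpSch}}(G,k^\times)$. But in your write-up that route is only a sketch, with its key points (algebraicity of $\mathcal{G}(L)$ and the exactness statements) left unverified, so the proof as it stands is incomplete.
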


\begin{proof}
Let $\mathrm{Ext}_\mathrm{Gp}^1(G,k^\times)$ denote the set of (not necessarily central) extensions of abstract groups, where $k^\times = \mathcal{O}_X(X)^\times$ is viewed as a G-module via the morphism $G \to \mathrm{Aut}(X)$. We first remark that, since $k^\times$ is an abelian group, then the set $\mathrm{Ext}_\mathrm{Gp}^1(G,k^\times)$ is itself endowed with the structure of an abelian group, with the Baer sum as the group operation; see Ex. 7 on page 114 of \cite{MacLane95}. We can then consider the subset $\mathrm{Ext}_\mathrm{GpSch}^1(G,k^\times)$ of extensions of group schemes. In fact, from the definition of the Baer sum, we can see that $\mathrm{Ext}_\mathrm{GpSch}^1(G,k^\times)$ is closed under the group operation, so that it is a subgroup of $\mathrm{Ext}_\mathrm{Gp}^1(G,k^\times).$

We define the map $\epsilon \colon \mathrm{Pic}(X)^G \rightarrow \mathrm{Ext}_\mathrm{GpSch}^1 (G, k^\times)$ as in Section 3.4 of \cite{Brion}. We introduce some notation, so that we can spell out this definition: For a line bundle $L$ over $X$, we define $\mathrm{Aut}(X)_L = \{ \phi \in \mathrm{Aut}(X) \ \colon \ \phi^\star L \cong L\}$ to be the subgroup of $\mathrm{Aut}(X)$ that stabilises $L$, and we let $\mathrm{Aut}^{\mathbb{G}_m} (L)$ denote the group of automorphisms of $L$ that commute with the action of $\mathbb{G}_m$ by multiplication on the fibres of $L$.

By Lemma 3.4.1 in \cite{Brion}, we have an extension $1 \rightarrow k^\times \rightarrow \mathrm{Aut}^{\mathbb{G}_m} (L) \rightarrow \mathrm{Aut}(X)_L \rightarrow 1$. If, furthermore, $L \in \mathrm{Pic}(X)^G$, then the image of the homomorphism $G \rightarrow \mathrm{Aut}(X)$ is contained in $\mathrm{Aut}(X)_L$. Thus, we can pull back the given group extension by the homomorphism $G \rightarrow \mathrm{Aut}(X)_L$, and we obtain another short exact sequence $1 \rightarrow k^\times \rightarrow \mathcal{G}(L) \rightarrow G \rightarrow 1$, where $\mathcal{G}(L)$ is called the lifting group associated to $L$. We then define $\epsilon$ by $L \mapsto (1 \rightarrow k^\times \rightarrow \mathcal{G}(L) \rightarrow G \rightarrow 1).$ We observe that since the morphism $G \rightarrow \mathrm{Aut}(X)$ is algebraic, then $\epsilon(L)$ is an extension of group schemes.

We let $\alpha \colon \mathrm{Pic}^G (X) \rightarrow \mathrm{Pic}(X)^G$ be the homomorphism that forgets the linearisation. By Cor 7.1 of \cite{Dolgachev2003}, we have that $\mathrm{ker}(\alpha) = \mathrm{Hom}_\mathrm{GpSch}(G,k^\times).$

As in Remark 3.4.3(i) in \cite{Brion}, $L \in \mathrm{Pic}(X)^G$ is linearisable if and only if the extension $\epsilon(L)$ splits as an extension of abstract groups and the resulting action of $G$ on $L$ is algebraic. Note that this latter condition is automatic when G is finite; in general, what we then require is that the extension $\epsilon(L)$ splits when viewed as an extension of algebraic groups, rather than just abstract groups. Indeed, in this situation, the induced group homomorphism $G \rightarrow \mathcal{G}(L) \rightarrow \mathrm{Aut}^{\mathbb{G}_m} (L)$ will be algebraic, and hence we obtain a linearisation of $L$. Thus, we have $\mathrm{im}(\alpha) = \mathrm{ker}(\epsilon)$, which gives our assertion. 

\end{proof}

We are now ready to prove our main theorem.

\begin{proof}[Proof of Theorem 1]
By Lemma 3, we have the following commutative diagram with exact rows:

\[
\begin{tikzcd}
0 \arrow{r} \arrow{d} & \mathrm{Hom}(G,k^\times) \arrow{r} \arrow{d} & \mathrm{Pic}^G (X) \arrow{r} \arrow{d} & \mathrm{Pic}(X)^G \arrow {r} \arrow{d} & \mathrm{Ext}_\mathrm{GpSch}^1 (G, k^\times) \arrow{d} \\
0 \arrow{r} & \mathrm{Hom}(G,k^\times) \arrow{r} & \mathrm{Pic}^G (Y) \arrow{r} & \mathrm{Pic}(Y)^G \arrow {r} & \mathrm{Ext}_\mathrm{GpSch}^1 (G, k^\times)
\end{tikzcd}
\]

The two leftmost and the rightmost vertical arrows are obviously isomorphisms. By hypothesis, the natural map $\mathrm{Pic}(X) \to \mathrm{Pic}(Y)$ is injective (resp. an isomorphism), and because it is $G$-equivariant, it induces an injection (resp. an isomorphism) of the $G$-invariant Picard groups $\mathrm{Pic}(X)^G \to \mathrm{Pic}(Y)^G$. In other words, the second vertical arrow (counting from the right) is injective (resp. an isomorphism). By the Five Lemma, the middle arrow is injective (resp. an isomorphism) as well.

\end{proof}

\begin{proof}[Proof of Corollary 2]

In view of Theorem 1, we just need to show that $\mathrm{Pic}(X) \to \mathrm{Pic}(Y)$ is injective when $\mathrm{dim}(Y) = 2$ and an isomorphism when $\mathrm{dim}(Y) \geq 3$. This follows immediately from the Grothendieck-Lefschetz Theorem for Picard groups.

\end{proof}

\newtheorem*{remark}{Remark}

\begin{remark}

We observe that if $X = \mathbb{P}_k^n$, then we can drop the hypothesis that $k$ has characteristic zero in Corollary 2, due to Cor XII.3.7 of \cite{SGA2}.

\end{remark}

\bibliographystyle{alpha}

\end{document}